\documentclass[a4paper]{article}

\usepackage{amsfonts}
\usepackage{amsmath}
\usepackage{amssymb}
\usepackage{amstext}
\usepackage{amsthm}
\usepackage{mathrsfs}

\newcommand{\C}{\mathbb C}
\newcommand{\R}{\mathbb R}

\newcommand{\eps}{\varepsilon}

\newtheorem{theorem}{Theorem}[section]
\newtheorem{defin}{Definition}[section]
\newtheorem{lemma}{Lemma}[section]
\newtheorem{prop}{Proposition}[section]

\DeclareMathOperator{\diam}{diam}

\DeclareMathOperator{\Id}{Id}
\DeclareMathOperator{\cat}{cat}

\begin{document}
\title{Semiclassical limit for the nonlinear Klein Gordon equation
in bounded domains}
\author{{ Marco G. Ghimenti}\\
{ Carlo R. Grisanti}\\
{\it\small Dipartimento di Matematica Applicata},\\ {\it\small Universit\`a di Pisa},
{\it\small via Buonarroti 1c, 56100, Pisa, Italy}\\
{\it\small e-mail: ghimenti@mail.dm.unipi.it}\\
{\it\small grisanti@dma.unipi.it}
}
\date{}
\maketitle
\begin{center}
{\bf\small Abstract}

\vspace{3mm}
\hspace{.05in}\parbox{4.5in}
{{\small We are interested to the existence of standing waves for the nonlinear
Klein Gordon equation
$\eps^2\Box\psi+W'(\psi)=0$ in a bounded domain $D$.
A standing wave has the
form $\psi(t,x)=u(x)e^{-i\omega t/\eps}$;
for these solutions the Klein Gordon equation becomes
\begin{equation}\label{abseq}\tag{$\dagger$}
\left\{
\begin{array}{ll}
-\eps^2\Delta u+W'(u)=\omega^2u& x\in D\\
(u,\omega)\in H^1_0(D)\times\R
\end{array}
\right.
\end{equation}
and we want to use a Benci-Cerami type argument in order to prove a
the existence of several standing waves localized in suitable points of $D$.

The main result of this paper is that, under suitable growth condition on $W$, for $\eps$
sufficiently small, we have at least $\cat(D)$ stationary solution of equation (\ref{abseq}),
while $\cat(D)$ is the Ljusternik-Schnirelmann category.

The proof is achieved by solving a constrained critical point problem via variational techniques.

}}
\end{center}

\noindent
{\it \footnotesize Mathematics Subject Classification}. {\scriptsize 35J60, 35Q55}.\\
{\it \footnotesize Key words}. {\scriptsize Klein Gordon Equation, Semiclassical Limit,
Variational Methods, Nonlinear Equations}.

\section{Introduction}

We are interested to the stationary solutions of
\begin{equation}\label{kg}\tag{*}
\left\{
\begin{array}{ll}
\eps^2\Box\psi+W'(\psi)=0& (t,x)\in\R^+\times D\\
\psi(\cdot,x)\in H^1_0(D,\C)
\end{array}
\right.
\end{equation}
where $\displaystyle\Box=\left(\frac{\partial}{\partial t}\right)^2-\Delta x$
and $D$ is an open set in $\R^N$.

The nonlinear Klein--Gordon equation (\ref{kg}) is the simplest equation
invariant for the Poincar\'e group which admits solitary waves. By solitary wave
we mean a solution of a field equation whose energy travels as a localized packet;
solitary waves which exhibit orbital stability are
called solitons.
The Klein Gordon equation is the Euler--Lagrange equation of the functional
$$\int{\cal L}(\partial_t\psi,\nabla\psi,\psi)\,dx\,dt$$
where the Lagrangian density is given by
$${\cal L}=\frac{\eps^2}2|\partial_t\psi|^2-\frac{\eps^2}2|\nabla\psi|^2-W(\psi).$$
Since the equation is invariant with respect to the Poincar\'e
group, by the Noether theorem, it admits several integrals of
motion, which are preserved in time. In particular, we are
interested in the conservation of the energy and the charge, which
play a fundamental role in our framework. The energy $E_\varepsilon$
and the charge $C$ have the following expressions:
$$E_\varepsilon=\int\left(\frac{\eps^2}2|\partial_t\psi|^2+
\frac{\varepsilon^2}2|\nabla\psi|^2+W(\psi)\right)\,dx$$
$$C=\text{Im}\int\partial_t\psi\overline\psi\,dx$$

If $D=\R^N$, the study of solitons for equation (\ref{kg}) has a very long history starting with
the pioneering paper of Rosen \cite{Ros68}.
Coleman \cite{CGM78} and Strauss \cite{St78} gave the first
rigorous proofs of existence of solutions of the type
(\ref{standing}) for particular forms of $W'$, and later necessary and
sufficient existence conditions have been found by Berestycki and Lions \cite{BL83}.

The first orbital stability result for (\ref{kg}) is due to
Shatah; in \cite{Sh83} a necessary and sufficient condition for
orbital stability is given. See also \cite{GSS87},
 for a generalization of the methods used in \cite{Sh83}.

Recently, in \cite{BBBM07,BBGM07},
the role played by the energy/charge ratio has been exploited in the existence
of solitons. In particular, if the previous ratio is small enough, we can find
solitons. The properties of solitons and a general approach to field equations
using the energy/charge ratio are studied in \cite{BBBS}; in particulare the
approach presented in \cite{BBBS} is suitable also to study the existence of vortices,
i.e. solitary waves with non-vanishing angular momentum. See also \cite{BBR,BB} for the
existence of vortices in the wave equation.

In this paper we are interested in equation (\ref{kg}) when
$D$ is bounded. In this case it is possible to prove the
existence of standing waves localized in suitable points of $D$.

A standing wave is a particular type of solitary wave having the
following form
\begin{equation}\label{standing}
\psi(t,x)=u(x)e^{-i\omega t/\eps},\ u\ge0,\ \omega\in\R.
\end{equation}
For solutions of the type (\ref{standing}) the Klein Gordon equation (\ref{kg}) becomes
\begin{equation}\label{maineq}\tag{$\dagger$}
\left\{
\begin{array}{ll}
-\eps^2\Delta u+W'(u)=\omega^2u& x\in D\\
(u,\omega)\in H^1_0(D)\times\R
\end{array}
\right.
\end{equation}
and we want to use a Benci-Cerami type argument \cite{BC94} in order to prove a
multiplicity result.

The energy and the charge for standing waves solutions take the form:
\begin{equation}
E_\eps(u,\omega)=\int\limits_D
\eps^2\frac{|\nabla u|^2}{2}+W(u)+\frac{\omega^2u^2}{2} dx;\quad
\displaystyle C=\omega\int|u|^2
\end{equation}
We know (see \cite{BBBM07}) that $\psi(t,x)=u_0(x)e^{\frac{-i\omega_0 t}{\eps}}$
is a stationary solution of (\ref{kg})
if and only if  $(u_0,\omega_0)$ is a critical point of the energy functional $E_\varepsilon$
constrained on the manifold
\begin{equation}
M_C(D)=
\left\{(u,\omega)\in H^1_0(D)\times\R\ :\ \omega\int\limits_D u^2dx =
C\right\},
\end{equation}

We made the
following assumptions on the nonlinearity $W$:
$\displaystyle W(s)=\frac{\Omega^2}{2}|s|^2+N(s)$ where $\Omega\in\R$
and $N(s)$ is a radially symmetric function with $N(0)=N'(0)=N''(0)=0$ which
satisfies
\begin{equation}\label{Np}\tag{$N_p$}
|N'(s)|\leq c_1|s|^{q-1}+c_2|s|^{p-1}
\end{equation}
where $c_1$ and $c_2$ are positive constants and $2<q\leq p<2^*$.
We need (\ref{Np}) in order to have a $C^1$ energy functional.

We also require the following hypotheses
\begin{equation}\label{W1}\tag{$W_1$}
W(s)\geq 0;
\end{equation}
\begin{equation}\label{W2}\tag{$W_2$}
\exists s_0 \text{ s.t. }N(s_0)<0.
\end{equation}

The hypothesis (\ref{W2}) is quite general and seems to be
necessary in order to have the energy/charge ratio which ensures
stable solitary waves.

The main result of this paper is the following:
\begin{theorem}\label{mainteo}
Let $\sigma$ be a charge sufficiently large. If $W$ satisfy (\ref{Np}),(\ref{W1}) and
(\ref{W2}) then, for $\eps$
sufficiently small, we have at least $\cat(D)$ stationary solution of
equation (\ref{maineq}) on the manifold $M_{\sigma\eps^N}(D)$.
\end{theorem}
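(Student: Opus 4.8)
The strategy is the classical Benci--Cerami scheme adapted to the $\eps$-rescaled problem on the fixed bounded domain $D$. First I would reduce the constrained problem on $M_{\sigma\eps^N}(D)$ to an unconstrained one: given $u\in H^1_0(D)\setminus\{0\}$, solve explicitly for $\omega$ in the charge constraint $\omega\int_D u^2 = \sigma\eps^N$, substitute into $E_\eps$, and obtain a reduced functional $J_\eps(u)$ whose critical points correspond to solutions of \eqref{maineq}. After the change of variables $x = \eps y + \xi$ one expects $J_\eps$ to converge, as $\eps\to 0$, to a limit functional on $H^1(\R^N)$ (or $\R^N$ itself in the ``soliton profile'' variable), whose positive ground state energy I will call $m_\infty$; here the hypotheses \eqref{Np}, \eqref{W1}, \eqref{W2} are exactly what is needed to guarantee that this limit functional has a mountain-pass geometry, a finite positive ground-state level, and that the charge being \emph{large} (the parameter $\sigma$) places the problem in the regime where the energy/charge ratio admits such a minimizer --- this is where \eqref{W2} enters decisively.

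Next I would verify the Palais--Smale condition for $J_\eps$ below the threshold $m_\infty + o(1)$: the subcriticality $p<2^*$ gives compactness of the relevant embeddings, and since $D$ is bounded there is no loss of mass at infinity, so PS sequences at low energy are precompact; the only obstruction to global compactness is the usual concentration phenomenon, which is precisely what produces the topological information. Then comes the core of the argument, the two maps. I would construct a ``concentration'' map $\Phi_\eps : D_\delta \to J_\eps^{m_\infty+\eta}$ (into a sublevel set), sending a point $\xi$ in a suitable $\delta$-interior of $D$ to a rescaled, cut-off copy of the limit ground state centered at $\xi$, and show $E_\eps(\Phi_\eps(\xi)) \le m_\infty + \eta$ with $\eta\to 0$ as $\eps\to 0$. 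Conversely I would construct a ``barycenter''-type map $\beta : J_\eps^{m_\infty+\eta} \to D_{\delta'}$ such that $\beta\circ\Phi_\eps$ is homotopic to the inclusion $D_\delta\hookrightarrow D_{\delta'}$; the existence of $\beta$ rests on the fact that a function with energy close to $m_\infty$ must be concentrated near a single point of $D$ (a consequence of the PS analysis and the characterization of $m_\infty$ as the minimal concentration energy). The standard Ljusternik--Schnirelmann inequality $\operatorname{cat}_{J_\eps^{m_\infty+\eta}}(J_\eps^{m_\infty+\eta}) \ge \operatorname{cat}(D)$ then follows, and since $J_\eps$ satisfies PS in that range, it has at least $\operatorname{cat}(D)$ critical points there.

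I expect the main obstacle to be the asymptotic analysis of the reduced functional $J_\eps$ and, in particular, establishing uniformly (in $\eps$) the energy estimate $E_\eps(\Phi_\eps(\xi)) = m_\infty + o(1)$ together with the matching lower bound needed for the barycenter map to land in $D$. The delicate points are: controlling the error introduced by the Dirichlet cut-off near $\partial D$ (handled by restricting to $\xi$ at distance $\geq\delta$ from the boundary and using exponential decay of the limit profile), and showing that the coupling term $\tfrac12\omega^2 u^2$ with $\omega$ determined by the large charge $\sigma\eps^N$ behaves, after rescaling, like the mass term of the limit problem with a definite sign --- this is why $\sigma$ must be taken large and why the inequality $N(s_0)<0$ is assumed. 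Once these quantitative estimates are in place, the topological conclusion is routine.
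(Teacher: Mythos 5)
Your outline is the same Benci--Cerami photography/barycenter scheme that the paper follows, and it is viable, but the implementation of the two key technical steps is genuinely different. For the photography map the paper never truncates the $\R^N$ ground state nor estimates a cut-off error via decay: it takes as $\Phi_\eps(y)$ the actual minimizer of $E_\eps$ on the small ball $B_{2r}(y)$ (which exists for every compact set by the Proposition in Section 2), so its level is \emph{exactly} $m(\eps,2r)<m(\eps,r)$ and no asymptotic estimate is needed. For the barycenter confinement the paper runs no Palais--Smale/splitting analysis of a reduced functional: it compares the ball level with the annulus level $m^*(\eps,\rho,\gamma)$ (infimum over the annulus with $\beta(u)=0$), using the exact scalings $m(\eps,\rho)=\eps^N m(1,\rho/\eps)$ and $m^*(\eps,\rho,\gamma)=\eps^N m^*(1,\rho/\eps,\gamma)$ together with Lemmas \ref{rhoRn}, \ref{gammaRn}, \ref{Rgamma}; if a low-energy $u$ had $\beta(u)\notin D^+$, then $D$ would sit in the annulus $B_{\gamma r}(\beta(u))\setminus B_r(\beta(u))$, contradicting $m(\eps,r)<m^*(\eps,r,\gamma)$ of Lemma \ref{mmstar}. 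Your route is the more standard semiclassical one and has the merit of making the limit level and the Palais--Smale condition explicit (the paper never verifies PS, though it is needed to pass from $\cat(E_\eps^{m(\eps,r)})\geq\cat(D)$ to $\cat(D)$ critical points); the paper's route is more economical, invoking compactness only once, for minimizing sequences of the limit problem inside Lemma \ref{gammaRn}, with all $\eps$-uniformity carried by scaling. Two small corrections: the limit problem is a constrained minimization at fixed charge, attained for $\sigma$ large by \cite{BBBM07}, not a mountain-pass problem; and on the bounded domain with $\eps$ fixed the subcritical growth gives compactness at every level, so concentration only matters for the $\eps$-uniform barycenter estimate.
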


\section{Useful estimates}
In the next, we often denote simply $E(u)$ for $E_1(u)$.

\

\begin{defin}
Fixed $\sigma$, we set, for any $\eps \in\R^+$ and for any $D\subseteq\R^N$
\begin{equation}
m_\sigma(\eps,D):=\inf\{E_\eps(u,\omega)\ :\ (u,\omega)\in M_{\sigma\eps^N}(D)\}
\end{equation}
\end{defin}
We know, by \cite{BBBM07} that
for a sufficiently large $\sigma$ the infimum
$m(1,\R^N)$ is attained by a positive radially symmetric function.
Indeed, if $(u,\omega)\in M_\sigma(\R^N)$ then
$\displaystyle u\left(\frac {x}{\eps}\right)\in M_{\sigma\eps^N}$ and
$\displaystyle E_\eps\left(u\left(\frac {x}{\eps}\right)\right)=\eps^NE(u)$,
so for
$\sigma$ sufficiently large we have that
$m_\sigma(\eps,\R^N)$ is attained for all $\eps$ and it holds
\begin{equation*}
m_\sigma(\eps,\R^N)=\eps^Nm_\sigma(1,\R^N)
\end{equation*}
Furthermore, if $D$ is a compact set, we have the following result.
\begin{prop}
For all compact set $D$, for all $\eps>0$, and for every charge $\sigma$,
$m_\sigma(\eps,D)$ is attained.
\end{prop}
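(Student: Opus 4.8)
The plan is to prove this via the direct method of the calculus of variations. Fix a compact set $D\subseteq\R^N$, a value $\eps>0$, and a charge $\sigma>0$, and write $C=\sigma\eps^N$. Let $(u_n,\omega_n)\in M_C(D)$ be a minimizing sequence, so that $E_\eps(u_n,\omega_n)\to m_\sigma(\eps,D)$. First I would observe that, because $W(s)\geq 0$ by hypothesis (\ref{W1}), the energy
\[
E_\eps(u_n,\omega_n)=\int_D \frac{\eps^2}{2}|\nabla u_n|^2 + W(u_n) + \frac{\omega_n^2 u_n^2}{2}\,dx
\]
controls $\eps^2\|\nabla u_n\|_{L^2(D)}^2$ and $\omega_n^2\|u_n\|_{L^2(D)}^2$. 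Since $D$ is bounded, Poincar\'e's inequality on $H^1_0(D)$ then gives a uniform bound on $\|u_n\|_{H^1_0(D)}$. Up to a subsequence, $u_n\rightharpoonup u$ weakly in $H^1_0(D)$ and strongly in $L^r(D)$ for all $2\leq r<2^*$ by the compact Sobolev embedding on the bounded domain $D$; in particular $\int_D u_n^2\,dx\to\int_D u^2\,dx$.

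Next I would extract the limit of the multiplier. The constraint reads $\omega_n\int_D u_n^2\,dx=C>0$, so $\int_D u_n^2\,dx\geq$ (some positive quantity) forces $\int_D u_n^2\,dx\not\to 0$ unless $\omega_n\to\infty$; but the energy bound $\omega_n^2\|u_n\|_{L^2}^2$ bounded, combined with $\omega_n\|u_n\|_{L^2}^2=C$, gives $\omega_n = C/\|u_n\|_{L^2}^2$ bounded \emph{away from $0$ and $\infty$} simultaneously — indeed $\omega_n^2\|u_n\|_{L^2}^2\leq 2m_\sigma(\eps,D)+o(1)$ and $\omega_n\|u_n\|_{L^2}^2=C$ together yield $\omega_n\leq (2m_\sigma(\eps,D)+o(1))/C$ and $\|u_n\|_{L^2}^2\geq C^2/(2m_\sigma(\eps,D)+o(1))$, so $\|u\|_{L^2}^2>0$ and $u\not\equiv 0$. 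Thus $\omega_n\to\omega:=C/\|u\|_{L^2(D)}^2$ and $(u,\omega)\in M_C(D)$: the constraint passes to the limit precisely because of the strong $L^2$ convergence.

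Finally I would check that $E_\eps$ is sequentially weakly lower semicontinuous along this sequence. The term $\int_D|\nabla u_n|^2$ is weakly l.s.c. by convexity. For the potential term, write $W(s)=\tfrac{\Omega^2}{2}|s|^2+N(s)$: the quadratic part converges by strong $L^2$ convergence, and $\int_D N(u_n)\to\int_D N(u)$ follows from growth condition (\ref{Np}) (which gives $|N(s)|\leq c_1'|s|^q+c_2'|s|^p$ with $q,p<2^*$) together with strong $L^q$ and $L^p$ convergence on the bounded domain. The term $\tfrac{1}{2}\int_D\omega_n^2 u_n^2$ converges since $\omega_n\to\omega$ and $u_n\to u$ in $L^2$. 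Hence $E_\eps(u,\omega)\leq\liminf E_\eps(u_n,\omega_n)=m_\sigma(\eps,D)$, and since $(u,\omega)\in M_C(D)$ the reverse inequality is automatic, so the infimum is attained at $(u,\omega)$.

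The main obstacle — and the reason compactness of $D$ is used in an essential way rather than merely for convenience — is ruling out vanishing of the $L^2$ mass, i.e. guaranteeing $u\not\equiv 0$ so that the constraint manifold is not left in the limit. On a bounded domain this is immediate from the compact embedding $H^1_0(D)\hookrightarrow L^2(D)$ as sketched above; on $\R^N$ one would instead need concentration-compactness to prevent the mass from escaping to infinity, which is exactly the subtlety handled separately (for $\sigma$ large) in the quoted results of \cite{BBBM07}. A secondary point requiring care is that $\omega_n$ stays bounded and bounded away from zero, which I would extract from the two relations $\omega_n\|u_n\|_{L^2}^2=C$ and $\omega_n^2\|u_n\|_{L^2}^2\leq 2E_\eps(u_n,\omega_n)$ as above.
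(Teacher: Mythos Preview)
Your proof is correct and follows the same overall architecture as the paper's: take a minimizing sequence, establish $H^1$ boundedness, pass to a weak limit, use the compact Sobolev embedding on the bounded domain to recover the constraint in the limit, and conclude by weak lower semicontinuity of $E_\eps$ together with the growth condition (\ref{Np}) to handle $\int N(u_n)$.

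The one substantive difference is in how you obtain the $L^2$ bound on $u_n$. You invoke Poincar\'e's inequality on $H^1_0(D)$, which is the natural and economical choice once the gradient is bounded and $D$ is bounded. The paper instead imports the argument from \cite{BBBM07}: it uses that $W''(0)=\Omega^2>0$ to get $W(s)\geq\beta_1 s^2$ near zero, combines this with the Sobolev bound on $\int u_n^{2^*}$ to control the contribution from $\{u_n\geq\delta\}$, and so rules out $\int u_n^2\to\infty$ without ever appealing to Poincar\'e. That argument is domain-independent and is really designed for the $\R^N$ setting where Poincar\'e is unavailable; on a bounded domain your route is strictly simpler. Conversely, the paper's version makes transparent that the nondegeneracy $W''(0)=\Omega^2$ is what replaces Poincar\'e when the domain is unbounded. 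A minor stylistic difference: the paper reads off the boundedness of $\omega_n$ in one line by rewriting $\int\tfrac{\omega_n^2 u_n^2}{2}=\tfrac{|C|\,|\omega_n|}{2}$ via the constraint, whereas you obtain it (equivalently) by dividing the two relations $\omega_n^2\|u_n\|_{L^2}^2\leq 2E_\eps$ and $\omega_n\|u_n\|_{L^2}^2=C$.
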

\begin{proof}
At first, we notice that, by (\ref{W1}), $m_\sigma(\eps,D)>0$.
Now, let $(u_n,\omega_n)$ a minimizing sequence in $M_{\sigma\eps^N}(D)$. We have that
$u_n$ is bounded in $H^1$ and that $\omega_n$ is bounded in $\R$. The boundedness
of $(u_n,\omega_n)$ is proven in \cite{BBBM07}, and we report here only the main lines.

We can write the energy as
\begin{equation}
E_\eps(u_n,\omega_n)= \int \eps^2\frac{|\nabla u_n|^2}{2}+W(u_n)\ dx
\ +\frac{|\sigma||\omega_n|}{2}.
\end{equation}
So, we have that $\omega_n$ and $\displaystyle\int |\nabla u_n|^2 dx$ are bounded.

Now, because $W(0)=W'(0)=0$ and $W''(0)=\Omega^2$, we can stand that
\begin{equation}\label{stimclaudio}
\exists \delta>0 \ \exists \beta_1>0, \text{ such that } W(s) \geq
\beta_1 s^2 \text { for } 0\leq |s| \leq \delta.
\end{equation}

We show that $\displaystyle\int u_n^2dx$ is bounded. Let us suppose, by contradiction, that
$\displaystyle\int u_n^2dx \rightarrow \infty$. We have that
$\displaystyle\int W(u_n)dx$ is bounded, so, by (\ref{stimclaudio}),
\begin{equation}\label{for}
\int W(u_n)dx \geq \int_{0 \leq u_n\leq \delta} W(u_n)dx \geq
\beta_1\int_{0 \leq u_n \leq \delta} u_n^2 dx .
\end{equation}
On the other hand
\begin{displaymath}
\int_{0\leq u_n\leq \delta} u_n^2 dx +\int_{u_n\geq \delta}
u_n^2dx \rightarrow \infty,
\end{displaymath}
thus we have, by equation (\ref{for}), that
$\displaystyle
\int_{u_n\geq \delta} u_n^2dx \rightarrow \infty$.
This drives to a contradiction because
\begin{equation}
\frac{1}{\delta^{2^*-2}} \ \int_{u_n\geq \delta} u_n^{2^*}dx \geq
\int_{u_n\geq \delta} u_n^2dx
\end{equation}
and by the Sobolev theorem
\begin{equation}
\int_{u_n\geq \delta} u_n^{2^*}dx \leq \int u_n^{2^*}dx \leq K
\int |\nabla u_n|^2dx< \text{const}.
\end{equation}

Thus $u_n$ is bounded in $H^1$ and $\omega_n$ is bounded in $\R$.
Up to subsequences, we have that
\begin{eqnarray}
&&\omega_n\rightarrow \omega\in\R;\\
&&u_n\rightarrow u \text{ strongly in }L^p,\ 2\leq p<2^*;\\
&&u_n\rightharpoonup u \text{ weakly in }H^1.
\end{eqnarray}
We have that $(u,\omega)\in M_{\sigma\eps^N}(D)$, and, by (\ref{Np}),
$W(u_n)\rightarrow W(u)$. Finally,
\begin{equation}
m_\sigma(\eps,D)=\liminf_{n\rightarrow\infty}E_\eps(u_n,\omega_n)\geq E_\eps(u,\omega)
\end{equation}
that conlcudes the proof.
\end{proof}
At last, by \cite{GNN79},
if $D=B(0,\rho)$ we have that $u$ is
positive, radially symmetric and satisfies the ordinary differential
equation
\begin{equation}\label{ode}
-\eps^2\frac{d^2u}{dr^2}-\frac{\eps^2(N-1)}{r}\frac{du}{dr}+W'(u)=\omega^2u
\end{equation}

From now on, we fix $\sigma$ sufficiently large in order to have that
$m_\sigma(\eps,\R^N)$ is attained. We remark that, if we require a stronger version of assumption
(\ref{W2}), namely
\begin{equation}\label{W2b}\tag{$W_2'$}
N(s)\leq -|s|^{2+\eps}, \text{ with }0<\eps<\frac 4N,\text{ for }|s|\text{ small,}
\end{equation}
we have that $m_\sigma(\eps,\R^N)$ is attained for all $\sigma$. In this case all our result can
be extended in a trivial way. For the proof of the existence of the minimizer for
every $\sigma$ and for the discussion on the hypothesis (\ref{W2b}) we refer to
\cite{BBBM07,BBGM07}.
\medskip

Hereafter, since we fix $\sigma$, we note simply $m_\sigma(\eps,D)$
by $m(\eps,D)$.

\begin{defin}
We set
\begin{equation}
m(\eps,\rho):=m(\eps,B_\rho(y)).
\end{equation}
\end{defin}
We notice that
$m(\eps,\rho)$ is well defined because its value does not depend on $y$.

\begin{lemma}\label{rho1rho2}
If $\rho_1<\rho_2$, then $m(\eps,\rho_1)>m(\eps,\rho_2)$
\end{lemma}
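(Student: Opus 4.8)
The plan is to exploit the monotonicity of the infimum under domain inclusion together with the strict inequality coming from the fact that the minimizer on the smaller ball, suitably extended, is never optimal on the larger ball. First I would observe the trivial inequality $m(\eps,\rho_1) \ge m(\eps,\rho_2)$: since $B_{\rho_1}(y) \subset B_{\rho_2}(y)$, any pair $(u,\omega) \in M_{\sigma\eps^N}(B_{\rho_1}(y))$ extends by zero to a pair in $M_{\sigma\eps^N}(B_{\rho_2}(y))$ with the same energy (the extension lies in $H^1_0$ of the bigger ball and the charge constraint $\omega\int u^2 = \sigma\eps^N$ is preserved), so the infimum over the larger class is no bigger. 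The content of the lemma is that the inequality is strict.

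For strictness, by the previous Proposition $m(\eps,\rho_1)$ is attained by some $(u_1,\omega_1)$, which by \cite{GNN79} we may take positive and radially symmetric on $B_{\rho_1}(y)$, satisfying the ODE (\ref{ode}); in particular $u_1 > 0$ in the interior and $u_1 = 0$ on $\partial B_{\rho_1}(y)$, with $\partial u_1/\partial r < 0$ at the boundary by Hopf's lemma. Extend $u_1$ by zero to $\tilde u_1 \in H^1_0(B_{\rho_2}(y))$. Then $(\tilde u_1,\omega_1) \in M_{\sigma\eps^N}(B_{\rho_2}(y))$ and $E_\eps(\tilde u_1,\omega_1) = m(\eps,\rho_1)$. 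If $m(\eps,\rho_1) = m(\eps,\rho_2)$ held, then $\tilde u_1$ would be a minimizer on $B_{\rho_2}(y)$, hence a (constrained) critical point, hence a weak solution of $-\eps^2\Delta \tilde u_1 + W'(\tilde u_1) = \omega_1^2 \tilde u_1$ on all of $B_{\rho_2}(y)$. But $\tilde u_1$ vanishes on the annulus $B_{\rho_2}(y)\setminus \overline{B_{\rho_1}(y)}$, which has nonempty interior, so by unique continuation (or more elementarily: the normal derivative jump at $\partial B_{\rho_1}(y)$ is nonzero by Hopf, contradicting that $\tilde u_1 \in H^1$ solves an elliptic equation across that interface — a distributional computation shows $-\eps^2\Delta\tilde u_1$ picks up a nonzero surface measure on $\partial B_{\rho_1}(y)$) we reach a contradiction. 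Hence $m(\eps,\rho_1) > m(\eps,\rho_2)$.

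An alternative, perhaps cleaner route that avoids regularity subtleties: take the minimizer $(u_2,\omega_2)$ on $B_{\rho_2}(y)$ — also positive, radial, strictly positive in the interior — and produce a strictly better competitor than $m(\eps,\rho_2)$ on $B_{\rho_1}(y)$ would be impossible, so instead one argues the reverse: rescale. Using the scaling relation noted in the excerpt, $u(x/\lambda)$ maps $M_{\sigma\eps^N}$-type constraints across dilated domains with energy scaling by $\lambda^N$ only when the Laplacian coefficient is also rescaled; here the coefficient $\eps$ is fixed, so pure scaling does not directly apply and this route needs the substitution $v(x) = u_2(\rho_2 x/\rho_1)$ together with a comparison of the resulting Dirichlet term, which introduces a factor $(\rho_2/\rho_1)^2 > 1$ in the gradient energy and a factor $(\rho_1/\rho_2)^N$ in the remaining terms — the net effect must be shown to be a strict decrease, which fails in general. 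So I would not pursue this; the domain-monotonicity-plus-unique-continuation argument above is the robust one.

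The main obstacle is the strictness step: ruling out that the zero-extended minimizer from the small ball remains a minimizer on the large ball. The honest way to close it is a unique continuation statement for the elliptic equation $-\eps^2\Delta u + (W'(u)/u - \omega^2)u = 0$ (note $W'(u)/u$ is bounded near the support boundary since $W''(0) = \Omega^2$), which forces a nontrivial solution vanishing on an open set to vanish identically — contradicting $\int \tilde u_1^2 = \sigma\eps^N/\omega_1 \neq 0$. Once that is granted, everything else is the routine extension-by-zero and attainment already established.
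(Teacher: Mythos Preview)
Your proposal is correct and follows essentially the same path as the paper: extend the minimizer on $B_{\rho_1}$ by zero, observe that equality would force the extension to solve the Euler--Lagrange equation on $B_{\rho_2}$, and derive a contradiction from its vanishing on the annulus. The only difference is in the final step: the paper exploits radial symmetry to reduce to the ODE (\ref{ode}) and invokes ODE uniqueness to conclude $\tilde u\equiv 0$, which is a bit more elementary than your appeal to unique continuation or the Hopf--lemma normal-derivative jump, though your arguments are equally valid.
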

\begin{proof}
Let $(\bar u,\bar \omega)\in M_{\sigma\eps^N}(B_{\rho_1})$ be such that
\begin{equation}
E_\eps(\bar u, \bar \omega)=m(\eps,\rho_1).
\end{equation}
We can define $\tilde u\in B_{\rho_2}$ by
\begin{equation}
\tilde u=\left\{
\begin{array}{cl}
u&\text{in } B_{\rho_1};\\
0&\text{outside}.
\end{array}
\right.
\end{equation}
Thus we have $m(\eps,\rho_1)\geq m(\eps,\rho_2)$. Suppose that the
equality holds. Then $\tilde u$ satisfies the ordinary
differential equation (\ref{ode}), but in this case $\tilde u\equiv 0$, and
this is a contradiction.
\end{proof}

\begin{lemma}\label{rhoRn}
The relation
\begin{equation}
\lim_{\rho\rightarrow+\infty}m(1,\rho)=m(1,\R^N)
\end{equation}
holds.
\end{lemma}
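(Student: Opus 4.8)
The plan is to squeeze $\lim_{\rho\to+\infty}m(1,\rho)$ between $m(1,\R^N)$ on both sides. First I would observe that extending a function by zero is harmless here: if $(u,\omega)\in M_\sigma(B_\rho(0))$, then the zero extension of $u$ to $\R^N$ lies in $M_\sigma(\R^N)$ (the charge is unchanged, since $u$ vanishes off $B_\rho(0)$) and has the same energy (since $W(0)=0$), so $m(1,\R^N)\le m(1,\rho)$ for every $\rho$. Combined with the monotonicity of $\rho\mapsto m(1,\rho)$ established in Lemma \ref{rho1rho2}, this shows that $\ell:=\lim_{\rho\to+\infty}m(1,\rho)$ exists and satisfies $\ell\ge m(1,\R^N)$. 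It then remains to produce the opposite bound $\ell\le m(1,\R^N)$.

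For that, I would start from a minimizer $(u_0,\omega_0)\in M_\sigma(\R^N)$ of $m(1,\R^N)$, which exists by our standing choice of $\sigma$ and is nontrivial (otherwise the charge $\omega_0\int u_0^2$ would be $0\ne\sigma$), so $\int_{\R^N}u_0^2=\sigma/\omega_0>0$. I would cut it off: choose $\chi_\rho\in C^\infty_c(B_\rho(0))$ with $0\le\chi_\rho\le 1$, $\chi_\rho\equiv 1$ on $B_{\rho-1}(0)$ and $|\nabla\chi_\rho|\le 2$, and set $v_\rho:=\chi_\rho u_0\in H^1_0(B_\rho(0))$, $v_\rho\ge 0$. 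Since $v_\rho\to u_0$ in $H^1(\R^N)$ and $\int_{\R^N}v_\rho^2\to\int_{\R^N}u_0^2>0$ as $\rho\to+\infty$, for $\rho$ large the number $\omega_\rho:=\sigma\big/\!\int_{\R^N}v_\rho^2$ is well defined, $(v_\rho,\omega_\rho)\in M_\sigma(B_\rho(0))$, and $\omega_\rho\to\omega_0$. By continuity of $E_1$ on $H^1_0\times\R$ this gives
\[
m(1,\rho)\le E_1(v_\rho,\omega_\rho)\longrightarrow E_1(u_0,\omega_0)=m(1,\R^N)\qquad(\rho\to+\infty),
\]
hence $\ell\le m(1,\R^N)$, and the lemma follows from the two bounds.

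The only step that needs a moment's attention is the convergence $E_1(v_\rho,\omega_\rho)\to E_1(u_0,\omega_0)$. The kinetic term $\int\frac12|\nabla v_\rho|^2$ and the term $\int\frac{\omega_\rho^2 v_\rho^2}{2}$ pass to the limit from $v_\rho\to u_0$ in $H^1$ and $\omega_\rho\to\omega_0$. For the potential term I would write $W(s)=\frac{\Omega^2}{2}|s|^2+N(s)$ and invoke $(\ref{Np})$, which forces $|N(s)|\le C(|s|^q+|s|^p)$ with $2<q\le p<2^*$; the Sobolev embedding then yields $v_\rho\to u_0$ in $L^q\cap L^p$, and a standard continuity argument for the Nemytskii (superposition) operator gives $\int_{\R^N}N(v_\rho)\,dx\to\int_{\R^N}N(u_0)\,dx$. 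So the one (modest) obstacle is precisely the one that hypothesis $(\ref{Np})$ is designed to remove, while checking $v_\rho\to u_0$ in $H^1$ and the rescaling of $\omega$ onto the constraint $M_\sigma$ are routine.
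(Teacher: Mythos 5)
Your proof is correct and follows essentially the same route as the paper: cut off the minimizer of $m(1,\R^N)$ with $\chi_\rho$, restore the charge constraint, and squeeze $m(1,\rho)$ between $m(1,\R^N)$ and the energy of the truncated competitor, which converges by $H^1$ convergence. The only (harmless) differences are that you restore the constraint by adjusting the frequency $\omega_\rho=\sigma/\int v_\rho^2$ instead of rescaling the amplitude $t_\rho w_\rho$ at fixed $\bar\omega$ as the paper does, and that you spell out the lower bound via zero extension plus Lemma \ref{rho1rho2} and the Nemytskii-continuity step for $\int N(v_\rho)$ using (\ref{Np}), which the paper leaves implicit.
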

\begin{proof}
Let $(\bar u, \bar \omega)\in M_\sigma(\R^N)$ such that
\begin{equation}
E(\bar u, \bar \omega)=m(1,\R^N);
\end{equation}
We know that $\bar u$ is radially symmetric. We choose a suitable cut off
$\chi_\rho\in C^\infty(\R^N)$ such that
\begin{eqnarray*}
&&\chi_\rho\equiv1\text{ if }|x|\leq\rho/2,\\
&&\chi_\rho\equiv0\text{ if }|x|\geq\rho,\\
&&|\nabla\chi_\rho|\leq2,
\end{eqnarray*}
and we define $w_\rho=\chi_\rho\bar u$. There exists a $t_\rho>0$ such that,
$\displaystyle \bar \omega\int |t_\rho w_\rho|^2=\sigma$.
We set $u_\rho=t_\rho w_\rho$,
so $u_\rho\in M_{\sigma}(B_\rho)$. Thus, by Lemma \ref{rho1rho2},
\begin{equation}
E(u_\rho, \bar \omega)\geq m(1,\rho)>m(1,\R^N).
\end{equation}
We want to prove that
\begin{equation}
\lim_{\rho\rightarrow\infty}E(u_\rho,\bar \omega)=m(1,\R^N).
\end{equation}
We have that $w_\rho\rightarrow \bar u$ in $L^2(\R^N)$ when
$\rho\rightarrow\infty$,
so $t_\rho\rightarrow1$ and $u_\rho\rightarrow \bar u$ in $L^2(\R^N)$.
At last we have
\begin{eqnarray*}
\int\limits_{\R^N}|\nabla w_\rho-\nabla \bar u|^2&=&
\int\limits_{\R^N}|(\nabla \chi_\rho)\bar u+(1-\chi_\rho)\nabla\bar u|^2\leq\\
&\leq&2\int\limits_{\R^N}|(\nabla \chi_\rho)\bar u|^2+2\int\limits_{\R^N}|(1-\chi_\rho)\nabla\bar u|^2\leq\\
&\leq&4\int\limits_{B_{\rho/2}^C}|\bar u|^2++2\int\limits_{B_{\rho/2}^C}|\nabla\bar u|^2\rightarrow 0
\end{eqnarray*}
when $\rho\rightarrow\infty$. So $\nabla w_\rho\rightarrow\nabla\bar u $ in $L^2$ and
$\nabla u_\rho\rightarrow\nabla\bar u $ in $L^2$.
Thus $u_\rho\rightarrow \bar u$ in $H^1(\R^N)$ and
\begin{equation}
\lim_{\rho\rightarrow\infty}E(u_\rho,\bar \omega)=E(\bar u,\bar \omega)=m(1,\R^N),
\end{equation}
that concludes the proof.
\end{proof}
\begin{defin}
For any $u$ in $H^1(\R^N)$ with compact support we define the barycentre map
\begin{equation}
\beta(u):=\frac{\displaystyle\int_{\R^N} x\cdot |\nabla u(x)|^2 dx}
{\displaystyle\int_{\R^N}  |\nabla u(x)|^2 dx}
\end{equation}
\end{defin}

\begin{defin}
We define, for every $\rho>0$ and for every $\gamma>1$,
\begin{equation*}
m^*(\eps,\rho,\gamma):=
\inf\{E_\eps(u,\omega)\ :\ (u,\omega)\in
M_{\sigma\eps^N}(B_{\gamma\rho}(0)\smallsetminus B_{\rho}(0)),\beta(u)=0\}.
\end{equation*}
\end{defin}

We notice that moving the center of the ball and $\beta(u)$ does not affect
$m^*(\eps,\rho,\gamma)$. Also, we remark that
\begin{equation}
m^*(\eps,\rho,\gamma)\geq m(\eps,\gamma\rho)>m(\eps,\R^N).
\end{equation}

\begin{defin}
We define
\begin{equation}
m^*(\eps,\gamma)=\inf_{\rho>0}m^*(\eps,\rho,\gamma).
\end{equation}
\end{defin}

\begin{lemma}\label{gammaRn}
The inequality
\begin{equation}
m^*(1,\gamma)>m(1,\R^N)
\end{equation}
holds for any fixed $\gamma>1$.
\end{lemma}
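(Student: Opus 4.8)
The plan is to argue by contradiction. The remark preceding the statement already gives $m^*(1,\rho,\gamma)\ge m(1,\gamma\rho)>m(1,\R^N)$ for every $\rho>0$, hence $m^*(1,\gamma)\ge m(1,\R^N)$; so I assume $m^*(1,\gamma)=m(1,\R^N)$ and look for a contradiction. Choose radii $\rho_n>0$ and a sequence $(u_n,\omega_n)\in M_{\sigma}\bigl(B_{\gamma\rho_n}(0)\smallsetminus B_{\rho_n}(0)\bigr)$ with $\beta(u_n)=0$ and $E_1(u_n,\omega_n)\to m(1,\R^N)$. First note that $\rho_n\to+\infty$: from $E_1(u_n,\omega_n)\ge m^*(1,\rho_n,\gamma)\ge m(1,\gamma\rho_n)$ together with Lemmas \ref{rho1rho2} and \ref{rhoRn} (so that $\rho\mapsto m(1,\rho)$ is strictly decreasing with limit $m(1,\R^N)$), a subsequence with $\rho_n\le M$ would force $E_1(u_n,\omega_n)\ge m(1,\gamma M)>m(1,\R^N)$, impossible. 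Extending each $u_n$ by zero, $(u_n,\omega_n)$ becomes a minimizing sequence for $m_\sigma(1,\R^N)=m(1,\R^N)$ on the whole space, so by the concentration--compactness analysis of \cite{BBBM07} (the same one that gives the attainment of $m_\sigma(1,\R^N)$ for $\sigma$ large) there are translations $y_n\in\R^N$ and a minimizer $\bar u$ with $v_n:=u_n(\cdot+y_n)\to\bar u$ strongly in $H^1(\R^N)$ along a subsequence; in particular $G_n:=\|\nabla u_n\|_{L^2}^2=\|\nabla v_n\|_{L^2}^2\to G:=\|\nabla\bar u\|_{L^2}^2>0$.

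The core of the argument is to show that the barycentre constraint $\beta(u_n)=0$ prevents $|\nabla u_n|^2\,dx$ from concentrating. I claim that, with $\theta:=\frac{2\gamma}{2\gamma+1}\in(0,1)$, for every $R>0$ there is $n_R$ such that $\int_{B_R(x_0)}|\nabla u_n|^2\,dx\le\theta\,G_n$ for all $n\ge n_R$ and all $x_0\in\R^N$. To see this, fix $R$ and $x_0$: if $\int_{B_R(x_0)}|\nabla u_n|^2\,dx>0$ then $\mathrm{supp}\,u_n\cap B_R(x_0)$ has positive measure, so it contains a point at distance $\ge\rho_n$ from the origin, whence $|x|>\rho_n-2R$ throughout $B_R(x_0)$; splitting the identity $\int_{\R^N}x\,|\nabla u_n|^2\,dx=0$ over $B_R(x_0)$ and its complement, and using $|x|\le\gamma\rho_n$ on $\mathrm{supp}\,u_n$, one obtains
\[
(\rho_n-2R)\int_{B_R(x_0)}|\nabla u_n|^2\,dx\le\Bigl|\int_{B_R(x_0)}x\,|\nabla u_n|^2\,dx\Bigr|=\Bigl|\int_{\R^N\smallsetminus B_R(x_0)}x\,|\nabla u_n|^2\,dx\Bigr|\le\gamma\rho_n\Bigl(G_n-\int_{B_R(x_0)}|\nabla u_n|^2\,dx\Bigr),
\]
hence $\int_{B_R(x_0)}|\nabla u_n|^2\,dx\le\frac{\gamma\rho_n}{(1+\gamma)\rho_n-2R}\,G_n$, which is $\le\theta\,G_n$ once $n$ is large, uniformly in $x_0$, since $\rho_n\to\infty$.

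It remains to combine the two facts. As $\bar u\in H^1(\R^N)$ we may fix $R$ so large that $\int_{\R^N\smallsetminus B_R(0)}|\nabla\bar u|^2\,dx$ is small; then, using $\|\nabla v_n-\nabla\bar u\|_{L^2}\to0$ and $G_n\to G$,
\[
\int_{B_R(y_n)}|\nabla u_n|^2\,dx=\int_{B_R(0)}|\nabla v_n|^2\,dx=G_n-\int_{\R^N\smallsetminus B_R(0)}|\nabla v_n|^2\,dx>\theta\,G_n
\]
for $n$ large, the complementary integral being $<(1-\theta)G_n$ eventually. Taking $x_0=y_n$ in the no-concentration bound contradicts this, and the contradiction establishes $m^*(1,\gamma)>m(1,\R^N)$.

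The step I expect to be the real obstacle is the reduction to $\R^N$: one needs that a minimizing sequence for $m_\sigma(1,\R^N)$ is relatively compact up to translations, i.e. that neither ``vanishing'' nor ``dichotomy'' occurs along it. For $\sigma$ large this is exactly the mechanism behind the attainment result imported from \cite{BBBM07}: vanishing is excluded because it would push the energy up to the level $|\Omega|\sigma$, which for $\sigma$ large exceeds $m_\sigma(1,\R^N)$ (this is where $(W_2)$ enters), while dichotomy is excluded by the strict subadditivity of $\sigma\mapsto m_\sigma(1,\R^N)$. By contrast the geometric ingredient --- that a mass distribution carried at distance $\ge\rho_n$ from the origin and having barycentre at the origin cannot put almost all of its weight on a ball of radius $o(\rho_n)$ --- is elementary, and is precisely the inequality displayed in the second paragraph.
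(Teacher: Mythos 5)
Your proposal is correct and follows the paper's skeleton up to the decisive last step: assume $m^*(1,\gamma)=m(1,\R^N)$, deduce $\rho_n\to\infty$ from Lemmas \ref{rho1rho2} and \ref{rhoRn}, extend by zero to obtain a minimizing sequence for $E_1$ on $\R^N$, and import compactness from \cite{BBBM07}. Where you genuinely diverge is in how the contradiction is reached. The paper cites a result (``step 1 of Theorem 5'', with the reference left unresolved) asserting that $u_n$ itself converges in $L^t_{\mathrm{loc}}$ to some $\bar u\neq0$, and concludes immediately from $\mathrm{supp}\,u_n\subset\R^N\smallsetminus B_{\rho_n}(0)$; the constraint $\beta(u_n)=0$ is never explicitly used in that final step. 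You instead assume only compactness up to translations $y_n$ (which is what concentration--compactness naturally provides for a translation-invariant functional) and then exploit the barycentre constraint quantitatively: the annulus geometry gives the uniform no-concentration bound $\int_{B_R(x_0)}|\nabla u_n|^2\le\theta G_n$ with $\theta=\frac{2\gamma}{2\gamma+1}<1$, while strong convergence of $v_n=u_n(\cdot+y_n)$ forces more than $\theta G_n$ of the gradient mass into $B_R(y_n)$. This closing argument is sharper and more robust: it makes explicit that $\beta(u)=0$ is the essential hypothesis (without it the infimum over $\rho$ collapses to $m(1,\R^N)$, since the annuli contain arbitrarily large balls), and it does not require knowing that the minimizing sequence converges without translations. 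Both proofs carry the same external debt --- exclusion of vanishing and dichotomy for minimizing sequences at large charge --- which you correctly attribute to \cite{BBBM07}, exactly as the paper does.

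One small point to tighten: the displayed inequality $(\rho_n-2R)\int_{B_R(x_0)}|\nabla u_n|^2\,dx\le\bigl|\int_{B_R(x_0)}x|\nabla u_n|^2\,dx\bigr|$ does not follow from the pointwise bound $|x|\ge\rho_n-2R$ alone, since a vector-valued integral may cancel; write $x=x_0+(x-x_0)$ and use $|x_0|\ge\rho_n-R$ together with $|x-x_0|\le R$ to get $\bigl|\int_{B_R(x_0)}x|\nabla u_n|^2\,dx\bigr|\ge(|x_0|-R)\int_{B_R(x_0)}|\nabla u_n|^2\,dx\ge(\rho_n-2R)\int_{B_R(x_0)}|\nabla u_n|^2\,dx$. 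With that one-line fix your argument is complete.
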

\begin{proof}
The inequality $m^*(1,\gamma)\ge m(1,\R^N)$ follows trivially from set
inclusion. Let us suppose, by contradiction, that $m^*(1,\gamma)=
m(1,\R^N)$. In this case we can find a sequence of positive numbers
$(\rho_n)$ such that
$$\lim_{n\to\infty} m^*(1,\gamma,\rho_n)=m(1,\R^N).$$
We claim
that $\rho_n\to\infty$. Indeed, if there exists $L>0$ such that
$0<\rho_n<L$ then, as before
$$m^*(1,\rho_n,\gamma)\ge m(1,\gamma L)>m(1,\R^N).$$
Hence we can suppose that $(\rho_n)$ is an increasing unbounded
sequence. Next we extend the functions $u_n$ to zero outside
$B_{\gamma\rho_n}(0)\setminus B_{\rho_n}(0)$ obtaining a minimizing
sequence $(u_n,\omega_n)$ for the functional $E_1$ in $\R^N$.
From step1 of Theorem 5 in ?? we have that $u_n$ converges to
$\overline u\not=0$ in $L^t_{\text{loc}}(\R^N)$ with $2\le t\le2^*$ and we
get a contradiction since the support of $u_n$ is contained in
$\R^N\setminus B_{\rho_n}(0)$ and $\rho_n\to\infty$.
\end{proof}

\begin{lemma}\label{Rgamma}
For any $\gamma>1$ there exists $\bar R=\bar R(\gamma)$ such that, for any
$R>\bar R$ we have
\begin{equation}
m(1,R)<m^*(1,R,\gamma)
\end{equation}
\end{lemma}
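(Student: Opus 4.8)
The plan is to exploit the strict gap between the limiting value $m(1,\R^N)$ and the annular infimum $m^*(1,\gamma)$ provided by Lemma~\ref{gammaRn}, together with the fact that $m(1,R)$ converges to $m(1,\R^N)$ as $R\to\infty$ (Lemma~\ref{rhoRn}). The point is that $m(1,R)$ can be pushed arbitrarily close to $m(1,\R^N)$ by taking $R$ large, while $m^*(1,R,\gamma)$ stays bounded below, uniformly in $R$, by the strictly larger quantity $m^*(1,\gamma)$.

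Concretely, I would proceed as follows. First, invoke Lemma~\ref{gammaRn} to set $\delta:=m^*(1,\gamma)-m(1,\R^N)$, which is strictly positive. Second, by Lemma~\ref{rhoRn}, $\lim_{R\to\infty}m(1,R)=m(1,\R^N)$, so there exists $\bar R=\bar R(\gamma)>0$ such that $m(1,R)<m(1,\R^N)+\delta=m^*(1,\gamma)$ for every $R>\bar R$. Third, from the very definition $m^*(1,\gamma)=\inf_{\rho>0}m^*(1,\rho,\gamma)$ one has $m^*(1,\gamma)\le m^*(1,R,\gamma)$ for each $R>0$ in particular. Chaining these inequalities gives $m(1,R)<m^*(1,\gamma)\le m^*(1,R,\gamma)$ for all $R>\bar R$, which is exactly the claim.

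There is essentially no hard computational step here; the whole content has already been packed into the earlier lemmas. The genuine obstacle — the strict inequality $m^*(1,\gamma)>m(1,\R^N)$, which rules out the degenerate case where the annular minimizers could mimic the free ground state by concentrating — was dealt with in Lemma~\ref{gammaRn} via the concentration–compactness argument showing that a would-be minimizing sequence on thin annuli escaping to infinity still produces a nontrivial $L^t_{\mathrm{loc}}$ limit, contradicting the support constraint. Granting that, the present lemma is a short quantitative comparison, and the only care needed is to record the dependence $\bar R=\bar R(\gamma)$, which is transparent since both $\delta$ and the rate of convergence of $m(1,R)$ depend only on $\gamma$ (through the fixed data $\sigma$, $W$).
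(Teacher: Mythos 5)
Your proposal is correct and follows essentially the same route as the paper: combine the strict inequality $m^*(1,\gamma)>m(1,\R^N)$ from Lemma \ref{gammaRn} with the convergence $m(1,R)\to m(1,\R^N)$ from Lemma \ref{rhoRn}, and conclude via $m^*(1,\gamma)\le m^*(1,R,\gamma)$, which holds by definition of the infimum over $\rho$.
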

\begin{proof}
Let us fix $\gamma>1$. It follows straightforward from the definition that
$$m^*(1,\gamma)\le m^*(1,\rho,\gamma)\qquad\forall \rho>0$$
From Lemma \ref{gammaRn} we have that $m^*(1,\gamma)>m(1,\R^N)$ and, by Lemma
\ref{rhoRn}, there exists $\overline R(\gamma)$ such that
$$m^*(1,\gamma)>m(1,R)\qquad\forall R>\overline R(\gamma).$$
This completes the proof.
\end{proof}

\begin{lemma}\label{mmstar}
For every $\rho>0$ and $\gamma>1$ there exists
$\bar\eps=\bar\eps(\rho,\gamma)$ such that, for any $0<\eps\leq \bar\eps$
\begin{equation}
m(\eps,\rho)<m^*(\eps,\rho,\gamma)
\end{equation}
\end{lemma}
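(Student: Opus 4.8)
The plan is to reduce the statement to Lemma \ref{Rgamma} by means of the $\eps$-dilation already used for $m_\sigma(\eps,\R^N)$. First I would record the scaling identity
$$m(\eps,\rho)=\eps^N m(1,\rho/\eps).$$
This follows from the change of variables $v(x)=u(\eps x)$: if $(u,\omega)\in M_{\sigma\eps^N}(B_\rho)$ then $v\in H^1_0(B_{\rho/\eps})$, and $\omega\int v^2=\eps^{-N}\omega\int u^2=\sigma$, so $(v,\omega)\in M_\sigma(B_{\rho/\eps})$; moreover a direct computation (as in the remark preceding Proposition 2.1) gives $E_\eps(u,\omega)=\eps^N E_1(v,\omega)$. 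Since the substitution is a bijection between the two constraint sets, the infima correspond.

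Next I would prove the analogous identity for the annular, barycentre-constrained quantity,
$$m^*(\eps,\rho,\gamma)=\eps^N m^*(1,\rho/\eps,\gamma).$$
The same substitution $v(x)=u(\eps x)$ sends the annulus $B_{\gamma\rho}(0)\setminus B_\rho(0)$ onto $B_{\gamma\rho/\eps}(0)\setminus B_{\rho/\eps}(0)$; the only extra point to check is the behaviour of the barycentre, and here $\int x\,|\nabla u|^2dx$ scales like $\eps^N$ while $\int |\nabla u|^2dx$ scales like $\eps^{N-2}$, so $\beta(u)=\eps^2\beta(v)$. Hence $\beta(u)=0$ if and only if $\beta(v)=0$, the substitution is again a bijection of the admissible sets, and the energies scale by $\eps^N$ as before.

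Finally, combining the two identities, the inequality $m(\eps,\rho)<m^*(\eps,\rho,\gamma)$ is, after dividing by $\eps^N$, equivalent to $m(1,\rho/\eps)<m^*(1,\rho/\eps,\gamma)$. By Lemma \ref{Rgamma} this holds whenever $\rho/\eps>\bar R(\gamma)$, so it suffices to put $\bar\eps(\rho,\gamma):=\rho/\bar R(\gamma)$. I do not expect any genuine obstacle: all the analytic content sits in Lemma \ref{Rgamma} (hence ultimately in Lemmas \ref{gammaRn} and \ref{rhoRn}), and what remains is the bookkeeping of the dilation, the only mildly delicate point being the $\eps^2$ factor appearing in the transformation of the barycentre map $\beta$.
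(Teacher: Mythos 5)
Your proposal is essentially the paper's own proof: the same dilation $v(x)=u(\eps x)$ giving $m(\eps,\rho)=\eps^N m(1,\rho/\eps)$ and $m^*(\eps,\rho,\gamma)=\eps^N m^*(1,\rho/\eps,\gamma)$, followed by Lemma \ref{Rgamma} with the choice $\bar\eps=\rho/\bar R(\gamma)$. The only blemish is the barycentre scaling: the numerator $\int x|\nabla u|^2\,dx$ scales like $\eps^{N-1}$, not $\eps^N$, so $\beta(u)=\eps\,\beta(v)$ rather than $\eps^2\beta(v)$ (the paper states $\beta(u_\eps)=\tfrac1\eps\beta(u)$), but this is immaterial since only the equivalence $\beta(u)=0\Leftrightarrow\beta(v)=0$ is used.
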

\begin{proof}
Given a function $u\in H^1_0(\R^N)$ and a real number $\eps>0$ we set
$u_\eps(x)=u(\eps x)$. A simple change of variable shows that:
$$(u,\omega)\in M_\sigma(B_{\gamma\rho}(0)\setminus B_{\rho}(0))
\Longleftrightarrow
(u_\eps,\omega)\in M_{\sigma/\eps^N}(B_{\gamma\rho/\eps}(0)\setminus B_{\rho/\eps}(0)).$$
Furthermore we have that $\beta(u_\eps)=\frac1\eps\beta(u)$, and
$$E_1(u_\eps,\omega)=\int_{B_{\gamma\rho/\eps}\setminus
B_{\rho/\eps}}\frac{|\nabla u_\eps|^2}2+W(u_\eps)+\frac{\omega
u_\eps^2}2\,dx=$$
$$=\frac1{\eps^N}\int_{B_{\gamma\rho}\setminus B_\rho}\eps^2\frac{|\nabla
u|^2}2+W(u)+\frac{\omega u^2}2\,dx=\frac1{\eps^N} E_\eps(u,\omega).$$
Hence we get:
$$m^*(\eps,\rho,\gamma)=\eps^N
m^*\left(1,\frac\rho\eps,\gamma\right),\qquad
m(\eps,\rho)=\eps^N m\left(1,\frac\rho\eps\right).$$
Let us choose $\displaystyle\overline\eps<\frac\rho{\overline R(\gamma)}$ where $\overline
R(\gamma)$ is defined in Lemma \ref{Rgamma}. With such a choice we have that,
for every $\eps\in(0,\overline\eps)$
$$m(\eps,\rho)=\eps^N m\left(1,\frac\rho\eps\right)<\eps^N m^*\left(1,\frac\rho\eps,\gamma\right)=m^*(\eps,\rho,\gamma).$$
\end{proof}

\section{Main result}

We assume, without loss of generality, that $0\in D$. There exists an
$r>0$ such that the sets
\begin{eqnarray*}
D^+:=\{x\in\R^N\ :\ d(x,D)\leq r\}&\text{ and }&
D^-:=\{x\in\R^N\ :\ d(x,\partial D)\geq 2r\}
\end{eqnarray*}
are homotopically equivalent to $D$ and $B_r(0)\subset D$. We set
\begin{equation}
\gamma=\frac{2\diam D}{r}
\end{equation}

\begin{lemma}
There exists $\bar \eps$ such that, for all $0<\eps\leq \bar\eps$
\begin{equation}
(u,\sigma)\in M_{\sigma\eps^N}(D),\  E_\eps(u,\omega)<m(\eps,r)\Rightarrow\  \beta(u)\in D^+.
\end{equation}
\end{lemma}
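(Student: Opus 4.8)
The plan is to argue by contradiction via the rescaling $u\mapsto u(\eps\,\cdot)$, which converts the bound $E_\eps(u,\omega)<m(\eps,r)$ into the statement that a rescaled family is a minimizing sequence for $m(1,\R^N)$, and then to locate its barycentre using the concentration--compactness analysis already employed in Lemma~\ref{gammaRn}. So, suppose the assertion fails: there are $\eps_n\to0$ and $(u_n,\omega_n)\in M_{\sigma\eps_n^N}(D)$ with $E_{\eps_n}(u_n,\omega_n)<m(\eps_n,r)$ but $d(\beta(u_n),D)>r$. Set $v_n(x)=u_n(\eps_n x)$. By the change of variables in Lemma~\ref{mmstar}, $(v_n,\omega_n)\in M_\sigma(D/\eps_n)$, $\beta(u_n)=\eps_n\beta(v_n)$, and
$$E_1(v_n,\omega_n)=\eps_n^{-N}E_{\eps_n}(u_n,\omega_n)<\eps_n^{-N}m(\eps_n,r)=m(1,r/\eps_n).$$
Extending $v_n$ by $0$ to $\R^N$ changes neither the charge nor the energy, so $E_1(v_n,\omega_n)\ge m(1,\R^N)$; since $r/\eps_n\to\infty$, Lemma~\ref{rhoRn} gives $m(1,r/\eps_n)\to m(1,\R^N)$, hence $E_1(v_n,\omega_n)\to m(1,\R^N)$, i.e. $(v_n,\omega_n)$ is a minimizing sequence for $m(1,\R^N)$.

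By the concentration--compactness step invoked in the proof of Lemma~\ref{gammaRn} (step~1 of Theorem~5 in \cite{BBBM07,BBGM07}), up to a subsequence there are translations $y_n\in\R^N$ such that $w_n:=v_n(\cdot+y_n)$ converges strongly in $H^1(\R^N)$ to a nonzero minimizer $\bar v$ of $m(1,\R^N)$, which may be assumed radially symmetric about the origin; in particular the gradient mass of $w_n$ stays tight. Since $\operatorname{supp}w_n\subset(D/\eps_n)-y_n$ and $\bar v\not\equiv0$, for $n$ large the support of $w_n$ meets any fixed ball on which $\bar v$ does not vanish, so $d(y_n,D/\eps_n)\le C$ with $C$ independent of $n$, whence $d(\eps_n y_n,D)\le C\eps_n\to0$. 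Moreover $\int|\nabla\bar v|^2>0$ since $\bar v\in H^1(\R^N)\setminus\{0\}$, so $\int|\nabla w_n|^2\ge c>0$ for $n$ large.

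It now suffices to prove $\eps_n|\beta(v_n)-y_n|\to0$; granting this,
$$d(\beta(u_n),D)\le|\beta(u_n)-\eps_n y_n|+d(\eps_n y_n,D)=\eps_n|\beta(v_n)-y_n|+d(\eps_n y_n,D)\to0,$$
contradicting $d(\beta(u_n),D)>r$. Now $\beta(v_n)-y_n$ equals $\int z\,|\nabla w_n(z)|^2\,dz$ divided by $\int|\nabla w_n(z)|^2\,dz$, and on $\operatorname{supp}w_n$ one has $|z|\le C'/\eps_n$ (as $0\in D$ forces $D/\eps_n\subset B(0,\diam D/\eps_n)$ and $|y_n|\le C+\diam D/\eps_n$). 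Splitting the numerator at $|z|=R$ and using the lower bound on the denominator,
$$\eps_n|\beta(v_n)-y_n|\le\frac1c\Big(\eps_n R\,\|\nabla w_n\|_2^2+C'\!\int_{|z|>R}|\nabla w_n|^2\,dz\Big).$$

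For fixed $R$ the first term tends to $0$ (as $\eps_n\to0$ and $\|\nabla w_n\|_2$ is bounded), while strong $H^1$ convergence yields $\limsup_n\int_{|z|>R}|\nabla w_n|^2\,dz\le2\int_{|z|>R}|\nabla\bar v|^2\,dz$, which vanishes as $R\to\infty$; hence $\eps_n|\beta(v_n)-y_n|\to0$, completing the contradiction. The delicate point is precisely this tail estimate: since the weight $|z|$ is only $O(1/\eps_n)$ on $\operatorname{supp}w_n$, mere local convergence of $w_n$ would not be enough, and one genuinely needs the compactness part of the concentration--compactness analysis — the fact that the gradient mass of the translated minimizing sequence does not leak to infinity.
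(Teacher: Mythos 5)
Your argument is correct, but it follows a genuinely different route from the paper. The paper's proof is a two-line geometric consequence of the machinery it has already built: if $(u^*,\omega^*)$ lies in the sublevel set and $x^*=\beta(u^*)\notin D^+$, then $d(x^*,D)>r$ while $|x^*-y|\le 2\diam D=\gamma r$ for $y\in D$, so $D\subset B_{\gamma r}(x^*)\smallsetminus B_r(x^*)$; hence $(u^*,\omega^*)$ is admissible for $m^*(\eps,r,\gamma)$ (recentered at its own barycentre) and $m^*(\eps,r,\gamma)\le E_\eps(u^*,\omega^*)<m(\eps,r)$, contradicting Lemma \ref{mmstar} as soon as $\eps\le\bar\eps(r,\gamma)$. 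You instead bypass the annulus quantity $m^*$ entirely: you rescale, observe via Lemmas \ref{rho1rho2} and \ref{rhoRn} that the rescaled pairs form a minimizing sequence for $m(1,\R^N)$, and then pin the barycentre by concentration--compactness. Your computations (scaling of charge, energy and barycentre, the bound $|z|\le C'/\eps_n$ on the support, the splitting at $|z|=R$, and the lower bound on the gradient mass) are all correct, and the final diagonal argument is sound. The trade-off is the strength of the external input: you need relative compactness in $H^1$ of minimizing sequences up to translations (no leakage of gradient mass), which is indeed available in \cite{BBBM07} since it underlies the orbital stability results there, but it is strictly more than the $L^t_{\mathrm{loc}}$ convergence that this paper quotes from ``step 1'' in the proof of Lemma \ref{gammaRn} --- as you yourself note, the weaker statement would not give the tail estimate, so you should cite the full compactness theorem rather than step 1. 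What each approach buys: the paper's proof is elementary given Lemma \ref{mmstar} (which is precisely what the $m^*$ machinery was built for), confines the concentration--compactness citation to one place, and produces the explicit threshold $\bar\eps(r,\gamma)$ that is reused verbatim in the proof of Theorem \ref{mainteo}; your proof is independent of Lemmas \ref{gammaRn}--\ref{mmstar} but yields a non-explicit $\bar\eps$ obtained by contradiction and leans on heavier compactness machinery.
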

\begin{proof}
Fixed $r,\gamma$ as above,
let $\eps<\bar\eps(r,\gamma)$ where $\bar \eps(r,\gamma)$ is defined
by Lemma \ref{mmstar}. Then $B_r(0)\subset D$ so $m(\eps,D)<m(\eps,r)$, so
the set
\begin{equation}
E_\eps^{m(\eps,r)}:=\{(u,\sigma)\in M_{\sigma\eps^N}(D),\  E_\eps(u,\omega)<m(\eps,r)\}
\end{equation}
is not empty.

Now, take $(u^*,\omega^*)\in E_\eps^{m(\eps,r)}$, and suppose that $x^*:=\beta(u^*)\not\in D^+$; because
$|\beta(u^*)|\leq \diam D$, we have that
\begin{displaymath}
D\subset B_{2\diam D}(x^*)\smallsetminus B_r(x^*)=
B_{\gamma r}(x^*)\smallsetminus B_r(x^*).
\end{displaymath}
Thus
\begin{eqnarray*}
m^*(\eps,r,\gamma)&=&
\inf\{E_\eps(u,\omega),\ (u,\omega)\in E_\eps^{m(\eps,r)},\ \beta(u)=x^*\}\leq\\
&\leq&E_\eps(u^*,\omega^*)<m(\eps,r)
\end{eqnarray*}
that contradicts Lemma \ref{mmstar}
\end{proof}

We want to define two continuous operator in order to prove the main theorem.
\begin{defin}
We define
\begin{eqnarray*}
&&B:H^1(D)\times\R\rightarrow\R^N;\\
&&B(u,\omega)=\beta(u).
\end{eqnarray*}
\end{defin}
The operator $B$ is well defined and continuous. Furthermore, if
$\eps<\bar \eps(r,\gamma)$ we have
\begin{equation*}
B(E_\eps^{m(\eps,r)})\subset D^+
\end{equation*}
as proved in the previous lemma.

Fixed $\sigma$ we can choose $(u_\eps,\omega_\eps)$ such that

\begin{eqnarray*}
&&E_\eps(u_\eps,\omega_\eps)= m(\eps, 2r)<m(\eps,r);\\
&&\omega_\eps=\frac{e^N\sigma}{\int|u_\eps|^2}.
\end{eqnarray*}
We know that $u_\eps$ is radially symmetric, so $\beta(u_\eps)=0$.
Of course we can extend  $u_\eps$ trivially by zero to a function
$\tilde u_\eps$ defined in $D$, in order to obtain a pair
$(\tilde u_\eps,\omega_\eps)\in M_{\eps^N\sigma}(D)$. With abuse of notation
in the next we will identify $\tilde u_\eps$ and $u_\eps$.
\begin{defin}
Fixed $\sigma$, for any $\eps$ we define
\begin{eqnarray*}
&&\Phi_\eps:D^-\rightarrow M_{\eps^N\sigma}(D);\\
&&\Phi_\eps(y)=(u_\eps(|x-y|),\omega_\eps).
\end{eqnarray*}
\end{defin}
It is easy to see that $\Phi_\eps$ is a continuous function.

\begin{proof}[Proof of Theorem \ref{mainteo}]
Fix $r$ such that $D^+$ and $D^-$ are homotopically equivalent to $D$.
Fix $\sigma$ sufficiently big in order to have that $m(1,\R^N)$ is attained,
and fix $\eps<\bar\eps(r,\gamma)$ as in Lemma \ref{mmstar}. We have that
\begin{equation}
\Phi_\eps(y)\subset E_\eps^{m(\eps,r)}
\end{equation}
for all $y\in D^-$. So, trivially
\begin{eqnarray*}
&&B(\Phi_\eps):D^-\rightarrow D^+;\\
&&B(\Phi_\eps)|_{D^-}\approx \Id_{D^-}.
\end{eqnarray*}
By a well known topological result, we have that
\begin{equation}
\cat (E_\eps^{m(\eps,r)} )\geq \cat(D),
\end{equation}
so we have at least $\cat(D)$ distinct stationary solution
of equation (\ref{kg}) with energy less that $m(\eps,r)$.
\end{proof}

\nocite{BC91,BC94}

\end{document}